\documentclass[11pt,a4paper]{article}
\usepackage{epsf,epsfig,amsfonts,amsgen,amsmath,amstext,amsbsy,amsopn,amsthm}
\usepackage{amsmath}
\usepackage{enumerate}
\usepackage{hhline}
\usepackage{multirow}
\usepackage{multicol}
\usepackage{booktabs}
\usepackage{lineno}
\usepackage{amsfonts,amsthm,amssymb,bm}
\usepackage{amsfonts}
\usepackage{graphics}
\usepackage{latexsym,bm}
\usepackage{amsfonts,amsthm,amssymb,bbding}
\usepackage{indentfirst}
\usepackage{graphicx}
\usepackage{color}
\usepackage[colorlinks=true,anchorcolor=blue,filecolor=blue,linkcolor=red,urlcolor=blue,citecolor=blue]{hyperref}
\usepackage{float}
\usepackage{tikz,enumerate}
\usetikzlibrary{calc}
\usepackage{geometry}
\newcommand{\mb}[1]{\boldsymbol{#1}}
\newcommand{\n}{\noindent}

\newtheorem{thm}{Theorem}[section]
\newtheorem{conj}[thm]{Conjecture}

\newtheorem{pro}[thm]{Problem}

\newtheorem{cor}[thm]{Corollary}

\newtheorem{prop}[thm]{Proposition}
\newtheorem{rem}{Remark}[section]

\renewcommand{\le}{\leqslant}

\renewcommand{\ge}{\geqslant}

\begin{document}

\title{Counterexamples to a conjecture on graph inertia}

\author{
Hongzhang Chen\thanks{School of Mathematics and Statistics, Gansu
Center for Applied Mathematics, Lanzhou University, Lanzhou, Gansu,
730000, China. Email: \url{mnhzchern@gmail.com}.} \and Jianxi
Li\thanks{Corresponding author. School of Mathematics and
Statistics, Minnan Normal University, Zhangzhou, Fujian, 363000,
China. Email: \url{ptjxli@hotmail.com}. Partially supported by the
NSF of Fujian Province (No.\ 2026J001968).} }

\date{\today}
\maketitle

% \linenumbers \pagewiselinenumbers

% \modulolinenumbers[2]

\begin{abstract}

The inertia of a graph $G$ is
\(\operatorname{In}(G)=(n^+(G),n^0(G),n^-(G))\), where $n^+(G),\,
n^0(G),\, n^-(G)$ are the numbers of positive, zero and negative
eigenvalues of the adjacency matrix of $G$, respectively, counted
with multiplicities. Akbari, Elphick, Kumar, Pragada and Tang
[\emph{Discrete Math. 349 (2026) 114953}] conjectured that every
graph \(G\) satisfies
\[
        2n^+(G)\le n^-(G)(n^-(G)+1).
\]
In this note, we construct a family of reduced graphs
\(\{W_k:k\ge5\}\) with
\[
        \operatorname{In}(W_k)
        =
        \left(\binom{k}{2}+1,\ 0,\ k-1\right),
\]
each of which violates the conjectured inequality.
We also observe that deleting the vertex \(a_1\) from \(W_5\) gives a reduced graph with inertia \((10,0,4)\), answering a question raised in the same paper. The family also refutes a weaker inequality proposed there.
\end{abstract}

{\bf Keywords:} Graph inertia; adjacency spectrum; reduced graphs.

{\bf 2020 AMS Subject Classifications:} 05C50.

\section{Introduction}

All graphs considered here are finite and simple. Let $G$ be a graph
with vertex set $V(G)$ and edge set $E(G)$. We denote by $n$ and
$e(G)$ the \emph{order} and the \emph{size} of $G$, respectively.
For any vertex $u\in V(G)$, let $N_{G}(u)$ (or $N(u)$ for short) be the set of neighbors of \(u\). 
We write $\mb{1}$ for the all-ones column vector and $J=\mb{1}\mb{1}^{\mathsf{T}}$ for the all-ones matrix. Let $I$ be the identity matrix. Let $A(G)$ be
the adjacency matrix of a graph $G$ of order $n$. We write $n^+(G),\, n^0(G),\, n^-(G)$ for the
numbers of positive, zero and negative eigenvalues of \(A(G)\),
respectively, counted with multiplicities. The inertia of \(G\) is
\[
\operatorname{In}(G)=(n^+(G),n^0(G),n^-(G)).
\]
The rank and the signature of \(G\) are
\[
        \operatorname{rank}(G)=n^+(G)+n^-(G),
        \qquad
        s(G)=n^+(G)-n^-(G).
\]
Two vertices \(u,v\in V(G)\) are twins if they have the same open neighbourhood, that is, \(N(u)=N(v)\). A graph is reduced if it has no isolated vertices and no twins.

The distribution of the adjacency eigenvalues on the two sides of the origin is a classical theme in spectral graph theory; see, for instance, Cvetkovi\'c, Doob and Sachs~\cite{CvetkovicDoobSachs} and Brouwer and Haemers~\cite{BrouwerHaemers}. Besides the nullity and the rank, the two quantities \(n^+(G)\) and \(n^-(G)\) are involved in problems on graph rank, graph signature, energy, and extremal questions with prescribed spectral restrictions. The problem of bounding the order of a graph in terms of its rank was studied by Kotlov and Lov\'asz~\cite{KotlovLovasz}, and related questions on ranks and signatures of adjacency matrices were considered by Akbari, Cameron and Khosrovshahi~\cite{AkbariCameronKhosrovshahi}. Bounds for the positive and negative inertia indices of graphs have also been studied by Ma, Yang and Li~\cite{MaYangLi} and by Fan and Wang~\cite{FanWang}.

A basic obstruction is that isolated vertices and twins do not
change the positive and negative inertia in an essential way. Hence
extremal questions of this kind are usually meaningful only for
reduced graphs. Torga\v{s}ev~\cite{Torgasev1985b} proved
that, for every fixed integer \(k\), there are only finitely many reduced graphs $G$ with \(n^-(G)=k\). He~\cite{Torgasev1985a,Torgasev1989,Torgasev1992} also determined
the maximum possible values of \(n^+(G)\) for reduced graphs with
\(n^-(G)=1,2,3\); these values are \(1,3,6\), respectively. Thus the first unsettled case in this direction is \(n^-(G)=4\).

Another source of motivation comes from strongly regular graphs. Let \(G\) be
a primitive strongly regular graph with spectrum
\[
        d^{(1)},\ r^{(f)},\ s^{(g)},\qquad d>r>0>s.
\]
The classical absolute bound gives
\[
        2n\le g(g+3),
\]
as a consequence of the work of Delsarte, Goethals and Seidel~\cite{DelsarteGoethalsSeidel}; see also Seidel's survey~\cite{Seidel} and Brouwer and Haemers \cite{BrouwerHaemers}. In this case, \(n^+(G)=f+1\), \(n^-(G)=g\), and
\(n=n^+(G)+n^-(G)\). Thus this bound is equivalent to
\[
        2n^+(G)\le n^-(G)(n^-(G)+1).
\]

Motivated by this observation, Akbari, Elphick, Kumar, Pragada and
Tang~\cite{AkbariEtAl} proposed the following conjecture.

\begin{conj}[\cite{AkbariEtAl}]\label{c1-1}
For every graph \(G\),
\begin{equation}\label{e-1}
        2n^+(G)\le n^-(G)(n^-(G)+1).
\end{equation}
\end{conj}

Conjecture~\ref{c1-1} would extend the absolute bound from strongly
regular graphs to arbitrary graphs, and at the same time would give
a quadratic upper bound for \(n^+(G)\) in terms of \(n^-(G)\).
Akbari \emph{et al.} verified Conjecture~\ref{c1-1} for several
graph classes, including planar graphs, line graphs,
self-complementary graphs, graphs with few odd cycles, tensor
products, joins, and cographs \cite{AkbariEtAl}.
They raised the following problem in the same article.
\begin{pro}[\cite{AkbariEtAl}]\label{p-1}
Does there exist a reduced graph $G$ with
\begin{equation}\label{e-2}
        n^-(G)=4,\qquad n^+(G)=10?
\end{equation}
\end{pro}
At the same time, they also proposed the following conjecture.
\begin{conj}[\cite{AkbariEtAl}]\label{conj-2}
For any graph $G$ of order $n$, we have
\begin{equation}\label{e-3}
        2n\le (n-n^+(G))(n-n^+(G)+3).
\end{equation}
\end{conj}

In this note, we show that (\ref{e-1}) is false by
constructing a family of reduced graphs \(W_k\) \((k\ge 5)\)
with
\[
        \operatorname{In}(W_k)=\left(\binom{k}{2}+1,\ 0,\ k-1\right).
\]
That is,
\[
        2n^+(W_k)
        =
        n^-(W_k)\bigl(n^-(W_k)+1\bigr)+2.
\]
Thus each \(W_k\) violates (\ref{e-1}). The smallest member \(W_5\) has inertia
$\operatorname{In}(W_5)=(11,0,4)$. Moreover, deleting the vertex \(a_1\) from \(W_5\) gives a reduced graph with inertia
$\operatorname{In}(W_5-a_1)=(10,0,4)$, which answers (\ref{e-2})
affirmatively. The same family also disproves the weaker inequality
(\ref{e-3}).

\section{The graphs \texorpdfstring{\(W_k\)}{Wk}}

Let \(k\ge 5\), and write $[k]=\{1,\ldots,k\}$.
Define a graph \(W_k\) as follows. Its vertex set is
\[
        V(W_k)=\{a_i:i\in [k]\}\cup\left\{b_S:S\in \binom{[k]}2\right\}.
\]
The vertices \(a_1,\ldots,a_k\) induce a complete graph \(K_k\). The vertices
\(b_S\) induce the Kneser graph on the 2-subsets of \([k]\): thus \(b_S\) is
adjacent to \(b_T\) if and only if \(S\cap T=\varnothing\). Finally, \(a_i\) is
adjacent to \(b_S\) if and only if \(i\in S\). Figure~\ref{fig:W5} is an example of $W_5$.

\begin{figure}[H]
\centering
\begin{tikzpicture}[scale=0.92,
  core/.style={circle,draw,fill=white,inner sep=1.3pt,minimum size=17pt,font=\scriptsize},
  outer/.style={circle,draw,fill=white,inner sep=1pt,minimum size=20pt,font=\scriptsize},
  rem/.style={circle,draw=red,thick,fill=red!8,inner sep=1.3pt,minimum size=18pt,font=\scriptsize}]

% core vertices a_i
\foreach \i/\ang in {1/90,2/162,3/234,4/306,5/18}{
  \node[core] (a\i) at (\ang:1.45) {$a_{\i}$};
}
\node[rem] at (a1) {$a_1$};
\draw[red,thick] ($(a1)+(-0.18,-0.18)$)--($(a1)+(0.18,0.18)$);
\draw[red,thick] ($(a1)+(-0.18,0.18)$)--($(a1)+(0.18,-0.18)$);

% b vertices on an outer decagon
\foreach \name/\lab/\ang in {
  b12/{12}/90,
  b13/{13}/126,
  b14/{14}/162,
  b15/{15}/198,
  b23/{23}/234,
  b24/{24}/270,
  b25/{25}/306,
  b34/{34}/342,
  b35/{35}/18,
  b45/{45}/54}{
  \node[outer] (\name) at (\ang:3.55) {$b_{\lab}$};
}

% Edges inside K_5
\foreach \i/\j in {1/2,1/3,1/4,1/5,2/3,2/4,2/5,3/4,3/5,4/5}{
  \draw[gray!45,line width=0.35pt] (a\i)--(a\j);
}

% Edges between a_i and b_S when i in S
\foreach \i/\s in {1/b12,2/b12,1/b13,3/b13,1/b14,4/b14,1/b15,5/b15,
                   2/b23,3/b23,2/b24,4/b24,2/b25,5/b25,
                   3/b34,4/b34,3/b35,5/b35,4/b45,5/b45}{
  \draw[gray!65,line width=0.35pt] (a\i)--(\s);
}

% Kneser edges among b_S vertices: disjoint 2-subsets
\foreach \u/\v in {b12/b34,b12/b35,b12/b45,b13/b24,b13/b25,b13/b45,
                   b14/b23,b14/b25,b14/b35,b15/b23,b15/b24,b15/b34,
                   b23/b45,b24/b35,b25/b34}{
  \draw[black!70,line width=0.5pt] (\u)--(\v);
}

\end{tikzpicture}
\caption{The graph \(W_5\).}
\label{fig:W5}
\end{figure}

Set $N=\binom{k}{2}$.
Let \(C\) be the \(k\times N\) point--2-subset incidence matrix, namely
\[
        C_{i,S}=
        \begin{cases}
        1,& i\in S,\\
        0,& i\notin S.
        \end{cases}
\]

Let \(K\) be the adjacency matrix of the Kneser graph on \(\binom{[k]}2\).
With the above ordering of the vertices,
\begin{equation}\label{e-4}
        A(W_k)=
        \begin{pmatrix}
        J_k-I_k & C\\
        C^T & K
        \end{pmatrix}.
\end{equation}

We shall use the identities
\begin{equation}\label{e-5}
        CC^T=(k-2)I_k+J_k
\end{equation}
and
\begin{equation}\label{e-6}
        K=J_N+I_N-C^TC.
\end{equation}
Indeed, each point of \([k]\) lies in \(k-1\) two-subsets, while two distinct
points lie together in exactly one two-subset, which gives (\ref{e-5}). Also
\((C^TC)_{S,T}=|S\cap T|\), so the right hand side of
(\ref{e-6}) has entry \(1\)
exactly when \(S\cap T=\varnothing\), and entry \(0\) otherwise, including on
the diagonal.

\begin{prop}\label{p2-1}
For \(k\ge 5\), the characteristic polynomial of \(W_k\) is
\begin{equation}\label{e-7}
\begin{aligned}
        \chi_{W_k}(x)
        &=
        (x-1)^{N-k}
        \bigl[x^2+(k-2)x-1\bigr]^{k-1}       \\
        &\quad\cdot
        \left[
        x^2-\left(k-1+\binom{k-2}{2}\right)x+(k-1)\left(\binom{k-2}{2}-2\right)
        \right].
\end{aligned}
\end{equation}
Consequently,
\[
        \operatorname{In}(W_k)=\left(\binom{k}{2}+1,\ 0,\ k-1\right).
\]
\end{prop}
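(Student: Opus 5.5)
We will compute the spectrum of $A=A(W_k)$ by splitting $\mathbb{R}^{k+N}$ into three $A$-invariant subspaces, using only (\ref{e-4})--(\ref{e-6}). Substituting (\ref{e-6}) into (\ref{e-4}) gives
\[
A=\begin{pmatrix} J_k-I_k & C\\ C^T & J_N+I_N-C^TC\end{pmatrix}.
\]
We also need $C\mb{1}_N=(k-1)\mb{1}_k$ and $C^T\mb{1}_k=2\mb{1}_N$.

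\emph{First subspace: $\ker C\subseteq\mathbb{R}^N$.} Its vectors are orthogonal to $\mb{1}_N$, because $\mb{1}_N=\tfrac12 C^T\mb{1}_k$ lies in the row space of $C$. By (\ref{e-5}), $CC^T$ is nonsingular, so $C$ has rank $k$ and $\dim\ker C=N-k$. For $w\in\ker C$ we get $A(0,w)^T=(Cw,\,(J+I-C^TC)w)^T=(0,w)^T$. This gives eigenvalue $1$ with multiplicity at least $N-k$.

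\emph{Second subspace: pairs $u\perp\mb{1}_k$.} For such $u$, take vectors $(\alpha u,\beta C^Tu)$. Then $J_N C^Tu=\mb{1}\mb{1}^TC^Tu=2\mb{1}(\mb{1}^Tu)=0$, and by (\ref{e-5}), $CC^Tu=(k-2)u$. Hence
\[
A(\alpha u,\beta C^Tu)=\bigl((-\alpha+(k-2)\beta)u,\ (\alpha+\beta-(k-2)\beta)C^Tu\bigr).
\]
So $A$ acts on $(\alpha,\beta)$ by the matrix $M=\begin{pmatrix}-1&k-2\\1&3-k\end{pmatrix}$. Its characteristic polynomial is $x^2+(k-2)x+(k-3)-(k-2)=x^2+(k-2)x-1$. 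Since $u$ ranges over a $(k-1)$-dimensional space, this factor appears with exponent $k-1$.

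\emph{Third subspace: span of $(\mb{1}_k,0)$ and $(0,\mb{1}_N)$.} We have $A(\mb{1}_k,0)=((k-1)\mb{1}_k,\,2\mb{1}_N)$. For the other vector, $C^TC\mb{1}_N=(k-1)C^T\mb{1}_k=2(k-1)\mb{1}_N$, so
\[
A(0,\mb{1}_N)=\bigl((k-1)\mb{1}_k,\ (N+1-2(k-1))\mb{1}_N\bigr).
\]
Using $N+3-2k=\binom{k-2}{2}$, the $2\times2$ matrix is $\begin{pmatrix}k-1&k-1\\2&\binom{k-2}{2}\end{pmatrix}$. Its trace is $k-1+\binom{k-2}{2}$ and its determinant is $(k-1)\bigl(\binom{k-2}{2}-2\bigr)$, which matches the last factor in (\ref{e-7}).

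\emph{Checking the decomposition.} The three subspaces have dimensions $(N-k)+2(k-1)+2=N+k$. They are mutually orthogonal: $\ker C$ is orthogonal to $C^Tu$ and to $\mb{1}_N$; $C^Tu$ is orthogonal to $\mb{1}_N$ since $\mb{1}^TC^Tu=2\,\mb{1}^Tu=0$; and $u\perp\mb{1}_k$. The only point needing care is that the second family really has dimension $2(k-1)$. This holds because $C^T$ is injective (again by (\ref{e-5})), and both $(u,0)$ and $(0,C^Tu)$ lie in the span. Together these give (\ref{e-7}).

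\emph{Inertia.} For $k\ge 5$:
\begin{itemize}
\item[(i)] The factor $x-1$ contributes $N-k$ positive eigenvalues.
\item[(ii)] The factor $x^2+(k-2)x-1$ has product of roots $-1$, so one positive and one negative root. This contributes $k-1$ positive and $k-1$ negative eigenvalues.
\item[(iii)] The last quadratic has positive trace, and its determinant is positive because $\binom{k-2}{2}\ge 3$ when $k\ge5$. So both roots are positive. They are real since the matrix is similar to a symmetric one, being the restriction of the symmetric $A$ to an invariant subspace.
\end{itemize}
The totals are $n^+=N-k+k-1+2=N+1$, $n^-=k-1$ and $n^0=0$.

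The main obstacle is only the bookkeeping in this decomposition: confirming that the three subspaces exhaust $\mathbb{R}^{N+k}$ and that the $2\times2$ blocks have been computed correctly.
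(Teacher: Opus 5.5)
Your proof is correct and follows essentially the same route as the paper. You use the same three invariant pieces: the span of $(\mb{1}_k,\mb{0})$ and $(\mb{0},\mb{1}_N)$ with quotient matrix $Q_k$, the $2(k-1)$-dimensional space $\{(\mb{x},C^T\mb{z}):\mb{x},\mb{z}\perp\mb{1}_k\}$ acted on by $M_k$, and $\{\mb{0}\}\oplus\ker C$ with eigenvalue $1$; the only cosmetic difference is that you certify the direct sum by mutual orthogonality. One harmless slip, made twice: $\mb{1}_N^TC^T\mb{u}=(C\mb{1}_N)^T\mb{u}=(k-1)\,\mb{1}_k^T\mb{u}$, not $2\,\mb{1}_k^T\mb{u}$ (you conflated it with $C^T\mb{1}_k=2\mb{1}_N$), but it vanishes either way for $\mb{u}\perp\mb{1}_k$.
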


\begin{proof}
The space
\[
        \mathbb R^k\oplus\mathbb R^N
\]
decomposes into three \(A(W_k)\)-invariant parts.

Let \(U=\mb{1}_k^\perp\). Since \(C^T\mb{1}_k=2\mb{1}_N\) and
\(CC^T=(k-2)I_k+J_k\) is nonsingular, the matrix \(C\) has rank \(k\).
Moreover, \(C^TU\) is orthogonal to \(\mb{1}_N\), and \(C^T\) is injective
on \(U\). Indeed, if \(C^T\mb{x}=\mb{0}\) with \(\mb{x}\in U\), then
\[
        CC^T\mb{x}=(k-2)\mb{x}=\mb{0},
\]
and hence \(\mb{x}=\mb{0}\). Thus \(\dim C^TU=k-1\). Also, if
\(\mb{y}\in\ker C\), then
\[
        2\mb{1}_N^T\mb{y}=\mb{1}_k^TC\mb{y}=0,
\]
so \(\ker C\subseteq \mb{1}_N^\perp\). Finally,
\[
        C^TU\cap \ker C=\{\mb{0}\}.
\]
Indeed, if \(C^T\mb{x}\in\ker C\) with \(\mb{x}\in U\), then
\[
        CC^T\mb{x}=(k-2)\mb{x}=\mb{0},
\]
and hence \(\mb{x}=\mb{0}\). Since
\[
        1+(k-1)+(N-k)=N,
\]
we obtain
\[
        \mathbb R^N
        =
        \langle \mb{1}_N\rangle
        \oplus C^TU
        \oplus \ker C.
\]
Therefore
\[
\begin{aligned}
        \mathbb R^k\oplus\mathbb R^N
        &=
        \langle(\mb{1}_k,\mb{0}),( \mb{0},\mb{1}_N)\rangle  \\
        &\quad\oplus
        \mathcal U_k
        \oplus
        (\{\mb{0}\}\oplus\ker C),
\end{aligned}
\]
where
\[
        \mathcal U_k=\{(\mb{x},C^T\mb{z}):\mb{x},\mb{z}\in U\}.
\]

We first consider the two-dimensional space spanned by
\[
        (\mb{1}_k,\mb{0}),\qquad (\mb{0},\mb{1}_N).
\]
Since each \(a_i\) is adjacent to \(k-1\) vertices of type \(a\) and \(k-1\)
vertices of type \(b\), while each \(b_S\) is adjacent to two vertices of type
\(a\) and \(\binom{k-2}{2}\) vertices of type \(b\), the action on this space
is represented by
\[
        Q_k=
        \begin{pmatrix}
        k-1 & k-1\\
        2 & \binom{k-2}{2}
        \end{pmatrix}.
\]
Thus this part contributes
\begin{equation}\label{e-8}
        x^2-\left(k-1+\binom{k-2}{2}\right)x
        +(k-1)\left(\binom{k-2}{2}-2\right).
\end{equation}

Since this quotient action is the restriction of the real symmetric matrix \(A(W_k)\) to an invariant subspace, the two roots of (\ref{e-8}) are real. For \(k\ge5\), the trace and determinant of \(Q_k\) are both positive; hence both roots of (\ref{e-8}) are positive.

Next we consider the subspace \(\mathcal U_k\). The map \(C^T\) is injective on
\(U\): if \(\mb{z}\in U\) and \(C^T\mb{z}=\mb{0}\), then
\(CC^T\mb{z}=(k-2)\mb{z}=\mb{0}\), and hence \(\mb{z}=\mb{0}\). Therefore
\(\mathcal U_k\) has dimension \(2(k-1)\). We show that \(\mathcal U_k\)
is \(A(W_k)\)-invariant. Let \(\mb{x},\mb{z}\in U\). By (\ref{e-5}),
\[
        CC^T\mb{z}=(k-2)\mb{z}.
\]
Moreover, since \(C\mb{1}_N=(k-1)\mb{1}_k\), we have
\[
        \mb{1}_N^T C^T\mb{z}=(C\mb{1}_N)^T\mb{z}=0.
\]
Thus \(C^T\mb{z}\perp \mb{1}_N\), and hence \(J_NC^T\mb{z}=\mb{0}\). By
(\ref{e-6}),
\[
        KC^T\mb{z}
        =
        (J_N+I_N-C^TC)C^T\mb{z}
        =
        C^T\mb{z}-C^TCC^T\mb{z}
        =
        (3-k)C^T\mb{z}.
\]
Therefore
\[
\begin{aligned}
A(W_k)(\mb{x},C^T\mb{z})
&=
\bigl((J_k-I_k)\mb{x}+CC^T\mb{z},\,
      C^T\mb{x}+KC^T\mb{z}\bigr)        \\
&=
\bigl(-\mb{x}+(k-2)\mb{z},\,
      C^T\mb{x}+(3-k)C^T\mb{z}\bigr)        \\
&=
\bigl(-\mb{x}+(k-2)\mb{z},\,
      C^T(\mb{x}+(3-k)\mb{z})\bigr).
\end{aligned}
\]
Thus \(\mathcal U_k\) is invariant. Under the identification
\[
        (\mb{x},C^T\mb{z})\longleftrightarrow(\mb{x},\mb{z}),
\]
the induced action is represented by
\[
        M_k=
        \begin{pmatrix}
        -1&k-2\\
        1&3-k
        \end{pmatrix}.
\]
The characteristic polynomial of \(M_k\) is
\begin{equation}\label{e-9}
        x^2+(k-2)x-1.
\end{equation}
Since \(\dim U=k-1\), the factor (\ref{e-9}) occurs with multiplicity
\(k-1\). It has one positive and one negative root.

Finally, if \(\mb{y}\in\ker C\), then \(C\mb{y}=\mb{0}\). Also
\[
        2\mb{1}_N^T \mb{y}=\mb{1}_k^TC \mb{y}=0,
\]
so \(J_N\mb{y}=\mb{0}\). By (\ref{e-6}),

\[
        K\mb{y}=\mb{y}.
\]
Since \((k-2)I_k+J_k\) is nonsingular, \(C\) has rank \(k\), and therefore
\[
        \dim\ker C=N-k.
\]
Thus \(\ker C\) contributes the factor \((x-1)^{N-k}\).

The three invariant parts have total dimension
\[
        2+2(k-1)+(N-k)=N+k=|V(W_k)|.
\]
Multiplying their characteristic factors gives (\ref{e-7}). Counting signs of the
roots gives
\[
        n^+(W_k)=N+1,\qquad n^0(W_k)=0,\qquad n^-(W_k)=k-1,
\]
as desired.
\end{proof}

\begin{prop}\label{p2-2}
For every \(k\ge5\), the graph \(W_k\) is reduced.
\end{prop}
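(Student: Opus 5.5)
We have to check two things: $W_k$ has no isolated vertices, and no two distinct vertices have the same open neighbourhood. The first is immediate from the degrees already used for the quotient matrix $Q_k$ in the proof of Proposition~\ref{p2-1}. Each $a_i$ has degree $2(k-1)>0$. Each $b_S$ has degree $2+\binom{k-2}{2}>0$.

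For twins there is a fast route and a direct one. The fast route uses Proposition~\ref{p2-1}: $n^0(W_k)=0$, so $A(W_k)$ is nonsingular. Twins $u\ne v$ would give equal rows of $A(W_k)$, making the matrix singular. An isolated vertex would give a zero row, which is excluded for the same reason. This settles the proposition at once, and I would present it as the main argument.

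For the direct combinatorial check I would split into three cases according to the types of the two vertices.
\begin{enumerate}
\item Two vertices $a_i,a_j$ with $i\ne j$. Since $k\ge5$, there is some $l\notin\{i,j\}$, and then $b_{\{i,l\}}$ is adjacent to $a_i$ but not to $a_j$.
\item Two vertices $b_S,b_T$ with $S\ne T$. Pick $i\in S\setminus T$; then $a_i$ is adjacent to $b_S$ but not to $b_T$.
\item A vertex $a_i$ and a vertex $b_S$. Every $a_j$ with $j\ne i$ is adjacent to $a_i$. However, $b_S$ has only two neighbours of type $a$, while $a_i$ has $k-1\ge4$ of them, so the neighbourhoods differ.
\end{enumerate}

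None of these steps is a real obstacle. The only point needing care is the use of $k\ge5$ in the cases involving $a$-vertices, and the nonsingularity argument bypasses even that.
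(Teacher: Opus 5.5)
Your proposal is correct, and your main argument takes a different route from the paper's. You derive reducedness from nonsingularity. Proposition~\ref{p2-1} gives $n^0(W_k)=0$, and the row of a vertex in $A(W_k)$ is the indicator vector of its open neighbourhood. So twins would give two equal rows and an isolated vertex a zero row, and either forces $\det A(W_k)=0$.

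The paper argues directly from the rows instead. The $a$-coordinates of the row of $a_i$ are $\mathbf{1}_k-\mathbf{e}_i$, and those of $b_S$ are $\mathbf{1}_S$. These vectors separate any two $a$-vertices and any two $b$-vertices, and the support sizes $k-1\neq 2$ separate an $a$-vertex from a $b$-vertex. Your backup case analysis is essentially this argument. In case~1 you separate $a_i,a_j$ using $b_{\{i,l\}}$ instead, though adjacency alone suffices, since $a_j\in N(a_i)\setminus N(a_j)$.

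The trade-off is as follows. Your route is a one-line use of a general fact (every graph with $n^0=0$ is reduced). It would also settle the reducedness of $H=W_5-a_1$ in Section~3 at once, since $\operatorname{In}(H)=(10,0,4)$. However, it makes Proposition~\ref{p2-2} depend on the whole spectral computation of Proposition~\ref{p2-1}. In particular, it needs $0$ not to be a root of either quadratic factor, whose constant terms are $-1$ and $(k-1)\bigl(\binom{k-2}{2}-2\bigr)$. The paper's check is elementary and independent of the eigenvalue computation, so it also serves as a sanity check on the construction itself.
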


\begin{proof}
The graph has no isolated vertices: the degree of \(a_i\) is \(2(k-1)\), and the degree of \(b_S\) is $2+\binom{k-2}{2}$. It remains to exclude twins. This can be read directly from the rows of \(A(W_k)\). With respect to the decomposition
\[
        \mathbb R^k\oplus \mathbb R^N,
\]
the part of the row corresponding to \(a_i\) in the
\(a\)-coordinates is $\mb{1}_k-\mb{e}_i$, where \(\mb{e}_i\) is the \(i\)-th standard basis vector of \(\mathbb R^k\), while the part of the row corresponding to \(b_S\)
in the \(a\)-coordinates is the incidence vector \(\mb{1}_S\) of
the $2$-subset \(S\).

If \(i\ne j\), then \(\mb{1}_k-\mb{e}_i\ne \mb{1}_k-
\mb{e}_j\), so \(a_i\) and \(a_j\) are not twins. If \(S\ne T\), then
\(\mb{1}_S\ne \mb{1}_T\), so \(b_S\) and \(b_T\) are not
twins. Finally,
\[
        |\operatorname{supp}(\mb{1}_k-\mb{e}_i)|=k-1,\qquad
        |\operatorname{supp}(\mb{1}_S)|=2,
\]
and these numbers are different for \(k\ge5\). Hence no \(a_i\) is a twin of
any \(b_S\). Therefore \(W_k\) is reduced.
\end{proof}

\begin{thm}
For every \(k\ge5\), the graph \(W_k\) is a reduced counterexample to
\[
        2n^+(G)\le n^-(G)(n^-(G)+1).
\]
\end{thm}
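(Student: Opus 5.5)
The theorem follows by combining Proposition~\ref{p2-1} and Proposition~\ref{p2-2}; no new spectral computation is needed. Proposition~\ref{p2-2} already shows that \(W_k\) is reduced for every \(k\ge5\). So the plan is to substitute the inertia from Proposition~\ref{p2-1} into (\ref{e-1}) and check that the inequality fails.

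By Proposition~\ref{p2-1}, \(n^+(W_k)=\binom{k}{2}+1\) and \(n^-(W_k)=k-1\). The left-hand side of (\ref{e-1}) is then
\[
        2n^+(W_k)=k(k-1)+2 .
\]
The right-hand side is
\[
        n^-(W_k)\bigl(n^-(W_k)+1\bigr)=(k-1)k .
\]
Hence \(2n^+(W_k)=n^-(W_k)(n^-(W_k)+1)+2\), which strictly exceeds the bound in (\ref{e-1}). So \(W_k\) violates Conjecture~\ref{c1-1}.

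There is no real obstacle, since the substantive work was done in Proposition~\ref{p2-1}. The only point to check is the sign count there. For \(k\ge5\), the factor \(x^2+(k-2)x-1\) has constant term \(-1\), so it has exactly one root of each sign, contributing \(k-1\) positive and \(k-1\) negative roots. The quotient factor has trace \(k-1+\binom{k-2}{2}>0\) and determinant \((k-1)\bigl(\binom{k-2}{2}-2\bigr)>0\), the latter because \(\binom{k-2}{2}\ge3\) when \(k\ge5\); so both of its real roots are positive. The \(N-k\) eigenvalues equal to \(1\) are positive. This gives \(n^+(W_k)=2+(k-1)+(N-k)=N+1\) positive eigenvalues and no zero eigenvalues, as claimed.
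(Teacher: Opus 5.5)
Your proposal is correct and matches the paper's proof: combine Proposition~\ref{p2-2} (reducedness) with the inertia $n^+(W_k)=\binom{k}{2}+1$, $n^-(W_k)=k-1$ from Proposition~\ref{p2-1} to obtain $2n^+(W_k)=n^-(W_k)\bigl(n^-(W_k)+1\bigr)+2$. Your re-check of the sign count is the same argument the paper gives inside the proof of Proposition~\ref{p2-1}. It uses the constant term $-1$ of $x^2+(k-2)x-1$, and the positive trace and determinant of the quotient matrix, the latter positive since $\binom{k-2}{2}\ge 3$.
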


\begin{proof}
From Propositions~\ref{p2-1} and~\ref{p2-2}, we know that
$W_k$ is reduced and
\[
        n^+(W_k)=\binom{k}{2}+1,\qquad n^-(W_k)=k-1.
\]

Thus $2n^+(W_k)=k(k-1)+2$, whereas
$n^-(W_k)(n^-(W_k)+1)=(k-1)k$. 
Hence
\[
        2n^+(W_k)=n^-(W_k)(n^-(W_k)+1)+2,
\]
so the conjectured inequality fails for every \(k\ge5\).
\end{proof}

\begin{rem}
The same family also disproves Conjecture 5.1 of \cite{AkbariEtAl}, namely
\[
        2n\le (n-n^+(G))(n-n^+(G)+3).
\]
Indeed, for \(W_k\),
\[
        n=|V(W_k)|=\binom{k+1}{2},\qquad n-n^+(W_k)=k-1.
\]
The proposed inequality would give
\[
        k(k+1)\le (k-1)(k+2),
\]
which is false.
\end{rem}

\section{The case \texorpdfstring{\(n^-(G)=4\)}{n-=4}}

The smallest member of the family is \(W_5\). It has
$\operatorname{In}(W_5)=(11,0,4)$,
and therefore already disproves (\ref{e-1}). We now show that deleting the vertex \(a_1\) from \(W_5\) gives a reduced graph with inertia \((10,0,4)\), answering (\ref{e-2}).

Let \(H=W_5-a_1\); see Figure~\ref{fig:W5}, where the deleted vertex $a_1$ is marked
in red. Set \(X=\{2,3,4,5\}\).
The vertices of \(H\) are naturally divided into
\[
        A=\{a_i:i\in X\},\qquad
        U=\{b_{\{1,i\}}:i\in X\},\qquad
        V=\left\{b_S:S\in\binom X2\right\}.
\]

The outer ten vertices in Figure~\ref{fig:W5} induce the Petersen graph.
The sizes of these three sets are \(4,4,6\). This partition is equitable, and the corresponding quotient matrix is
\[
        Q=
        \begin{pmatrix}
        3&1&3\\
        1&0&3\\
        2&2&1
        \end{pmatrix}.
\]
Therefore the constant-on-cells subspace contributes
\begin{equation}\label{e-10}
        \det(xI-Q)=x^3-4x^2-10x+7.
\end{equation}

It remains to describe the nonconstant part. Let \(D\) be the \(4\times6\) incidence matrix between the points of \(X\) and the 2-subsets of \(X\). Let \(R\) be the adjacency matrix of the matching on \(\binom X2\) which sends a 2-subset to its complement in \(X\). Let \(E\) be the \(4\times6\) matrix defined by
\[
        E_{i,S}=1\quad\Longleftrightarrow\quad i\notin S.
\]
If \(\mb{x}\in\mathbb R^4\) satisfies \(\mb{x}\perp\mb{1}_4\), and
if $\mb{z}=D^T \mb{x}$, then
\begin{equation}\label{e-11}
        DD^T\mb{x}=2 \mb{x},\qquad E \mb{z}=-2\mb{x},\qquad E^T \mb{x}=- \mb{z},\qquad R \mb{z}=- \mb{z}.
\end{equation}
These identities follow from \(D D^T=2I_4+J_4\), from \(E=J_{4,6}-D\), and from the fact that the complement of a two-subset of \(X\) has incidence vector \(\mb{1}_4-\mb{1}_S\).

Let \(U_0=\mb{1}_4^\perp\). Since \(D^T\) is injective on \(U_0\), the space
\[
        \mathcal U=
        \{(\mb{\xi},\mb{\eta},D^T\mb{\zeta}):
        \mb{\xi},\mb{\eta},\mb{\zeta}\in U_0\}
\]
has dimension \(9\). Indeed, if \(D^T\mb{\zeta}=\mb{0}\) with
\(\mb{\zeta}\in U_0\), then
\[
        DD^T\mb{\zeta}=2\mb{\zeta}=\mb{0},
\]
and hence \(\mb{\zeta}=\mb{0}\).
For \(\mb{\xi},\mb{\eta},\mb{\zeta}\in U_0\),
using (\ref{e-11}), we obtain
\[
\begin{aligned}
A(H)(\mb{\xi},\mb{\eta},D^T\mb{\zeta})
&=
\bigl(
        -\mb{\xi}+\mb{\eta}+2\mb{\zeta},\,
        \mb{\xi}-2\mb{\zeta},\,
        D^T(\mb{\xi}-\mb{\eta}-\mb{\zeta})
\bigr).
\end{aligned}
\]
Hence \(\mathcal U\) is \(A(H)\)-invariant. Under the identification
\[
        (\mb{\xi},\mb{\eta},D^T\mb{\zeta})
        \longleftrightarrow
        (\mb{\xi},\mb{\eta},\mb{\zeta}),
\]
the induced action is given by the \(3\times3\) matrix
\[
        M=
        \begin{pmatrix}
        -1&1&2\\
        1&0&-2\\
        1&-1&-1
        \end{pmatrix}.
\]
Therefore this \(9\)-dimensional part contributes
\begin{equation}\label{e-12}
        \det(xI-M)^3
        =
        (x-1)^3(x^2+3x-1)^3.
\end{equation}

Finally, \(\ker D\) has dimension \(2\), because \(D D^T=2I_4+J_4\)
is nonsingular. If \(\mb{y}\in\ker D\), then \(\mb{1}_6^T\mb{y}=0\), hence \(E\mb{y}=0\).
The equations \(D\mb{y}=0\) also imply
\[
        y_S=y_{X\setminus S}
        \qquad
        \left(S\in\binom X2\right).
\]
Indeed, if \(S=\{i,j\}\) and \(X\setminus S=\{p,q\}\), then subtracting
the sum of the \(p,q\)-rows of \(D\mb{y}=\mb{0}\) from the sum of the
\(i,j\)-rows gives
\[
        2\bigl(y_S-y_{X\setminus S}\bigr)=0.
\]
Therefore \(R\mb{y}=\mb{y}\), and \(\ker D\) contributes an additional
factor \((x-1)^2\).

The three invariant parts described above have dimensions \(3\), \(9\), and
\(2\), respectively, and hence exhaust the \(14\)-dimensional space of \(H\).
Combining (\ref{e-10}), (\ref{e-12}), and the last factor gives
\begin{equation}\label{e-13}
\chi_H(x)=(x-1)^5(x^2+3x-1)^3(x^3-4x^2-10x+7).
\end{equation}
The quadratic \(x^2+3x-1\) has one positive root and one negative root. The cubic
\[
        p(x)=x^3-4x^2-10x+7
\]
has exactly one negative root: indeed, \(p(-3)<0<p(-2)\), while Descartes' rule applied to \(p(-x)\) gives at most one negative root. Since \(p\) is a factor of the characteristic polynomial of the real symmetric matrix \(A(H)\), all roots of \(p\) are real. As \(p(0)\ne0\), the other two roots are positive. It follows from (\ref{e-13}) that
\[
        \operatorname{In}(H)=(10,0,4).
\]

It remains to check that \(H\) is reduced. This is again immediate from the neighbourhood vectors. Vertices in \(A,U,V\) have degrees \(7,4,5\), respectively, so twins can occur only inside one of these three classes. For \(a_i,a_j\in A\), the \(U\)-part of their neighbourhood vectors is \(\mb{e}_i,\mb{e}_j\), and these are distinct when \(i\ne j\). For \(b_{\{1,i\}},b_{\{1,j\}}\in U\), the \(A\)-part of their neighbourhood vectors is again \(\mb{e}_i,\mb{e}_j\). For \(b_S,b_T\in V\), the \(A\)-part of their neighbourhood vectors is \(\mb{1}_S,\mb{1}_T\), which is distinct when \(S\ne T\). Thus \(H\) has no twins. Since all three degrees \(7,4,5\) are positive, \(H\) has no isolated vertices.

\begin{cor}
There exists a reduced graph $G$ with
\[
        n^-(G)=4,\qquad n^+(G)=10.
\]
\end{cor}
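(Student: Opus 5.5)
The corollary follows by taking $G=H=W_5-a_1$ and combining the two facts established in the discussion immediately preceding it. I will not build a new graph; I will only make sure both facts are fully justified and then put them together.

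\textbf{Step 1: the inertia of $H$.} Starting from the factorization (\ref{e-13}), I count the roots by sign.
\begin{itemize}
\item The factor $(x-1)^5$ gives $5$ positive eigenvalues.
\item The quadratic $x^2+3x-1$ has discriminant $13>0$ and constant term $-1<0$, so it has one positive and one negative root. With multiplicity $3$ this gives $3$ positive and $3$ negative eigenvalues.
\item For the cubic $p(x)=x^3-4x^2-10x+7$, all roots are real because $p$ divides the characteristic polynomial of a symmetric matrix. Since $p(-x)=-x^3-4x^2+10x+7$ has exactly one sign change, $p$ has at most one negative root. Because $p(-3)=-20<0<p(-2)=3$, it has exactly one. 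As $p(0)=7\ne 0$, the remaining two roots are positive.
\end{itemize}
In total $n^+(H)=5+3+2=10$, $n^-(H)=3+1=4$ and $n^0(H)=0$.

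\textbf{Step 2: $H$ is reduced.} I use the degree and neighbourhood-vector argument already given.
\begin{itemize}
\item The classes $A,U,V$ have degrees $7,4,5$ respectively. All are positive, so $H$ has no isolated vertices, and twins could only occur within a single class.
\item Within $A$, the $U$-part of the neighbourhood of $a_i$ is $\mb{e}_i$, so distinct vertices are distinguished.
\item Within $U$, the $A$-part of the neighbourhood of $b_{\{1,i\}}$ is $\mb{e}_i$, again distinguishing distinct vertices.
\item Within $V$, the $A$-part of the neighbourhood of $b_S$ is $\mb{1}_S$, which differs for distinct $S$.
\end{itemize}
Hence $H$ has no twins.

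\textbf{Main obstacle.} The only substantive point is the correctness of (\ref{e-13}), which rests on three checks.
\begin{itemize}
\item The partition $\{A,U,V\}$ is equitable with quotient matrix $Q$. For example, each $b_S\in V$ has two neighbours in $A$, two in $U$ (namely $b_{\{1,j\}}$ for $j\in X\setminus S$), and one in $V$ (namely $b_{X\setminus S}$).
\item The identities (\ref{e-11}) hold, and they give the matrix $M$ on the $9$-dimensional invariant space $\mathcal U$.
\item The kernel part $\ker D$ contributes the eigenvalue $1$.
\end{itemize}
As a cross-check I would verify that the dimensions add up, $3+9+2=14$, and that the trace is zero. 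For the trace: the factor $(x-1)^5$ contributes $5$, the three copies of $x^2+3x-1$ contribute $3\cdot(-3)=-9$, and the cubic contributes $4$, giving $5-9+4=0=\operatorname{tr}A(H)$.

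With these checks in place, $G=H$ is the required reduced graph with $n^-(G)=4$ and $n^+(G)=10$.
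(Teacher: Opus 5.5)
Your proposal is correct and follows the paper's proof: take $G=H=W_5-a_1$, read the inertia $(10,0,4)$ off the factorization (\ref{e-13}), and use the degree and neighbourhood-vector argument to show $H$ is reduced; your additional checks (the equitable quotient, the dimension count $3+9+2=14$, and the trace $5-9+4=0$) are all valid. One small arithmetic slip: $p(-3)=-27-36+30+7=-26$, not $-20$, but this is harmless because only the sign $p(-3)<0$ is used.
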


\begin{proof}
Take \(G=H=W_5-a_1\).
\end{proof}

\section{Conclusion}

The graphs \(W_k\) with \(k\ge5\) form an infinite family of reduced counterexamples to the proposed inequality
\[
        2n^+(G)\le n^-(G)(n^-(G)+1).
\]
They satisfy
\[
        \operatorname{In}(W_k)=\left(\binom{k}{2}+1,\ 0,\ k-1\right),
\]
so the conjectured upper bound is exceeded by exactly \(2\). The
smallest member \(W_5\) has inertia \((11,0,4)\), and the graph
\(W_5-a_1\) is reduced with inertia \((10,0,4)\).

\vspace{6mm}

\n{\bf Acknowledgements:} %The authors would like to thank the
%anonymous referees for their constructive corrections and valuable
%comments on this paper, which have considerably improved the
%presentation of this paper.
This work was supported by the Fujian Key Laboratory of Granular
Computing and Applications.

\paragraph{Data availability.}
Data sharing is not applicable to this article as no datasets were generated or
analysed during the current study.

\paragraph{Conflict of interest.}
The authors have no relevant financial or non-financial interests to disclose.

\end{document}